\documentclass[a4paper,twoside,reqno]{amsart}
\usepackage{amsmath}
\usepackage{amsfonts}
\usepackage{amsthm}
\usepackage{amssymb}
\usepackage{verbatim}
\usepackage{lingmacros}
\usepackage{hyperref}
\usepackage{comment}
\usepackage{nth}
\usepackage[english]{babel}
\usepackage{graphicx}
\usepackage{wrapfig}
\usepackage{footmisc}
\usepackage{ifpdf}
\ifpdf
\DeclareGraphicsRule{*}{mps}{*}{}
\fi
\usepackage{tikz}
\usepackage{tikz-cd} 
\newcommand{\N}{\mathbb N}
\newcommand{\C}{\mathbb C}
\newcommand{\Z}{\mathbb Z}
\newcommand{\R}{\mathbb R}
\newcommand{\PP}{\mathbb P}
\newcommand{\A}{\mathbb A}
\newcommand{\Q}{\mathbb Q}
\newcommand{\F}{\mathcal{F}}
\renewcommand{\O}{\mathcal{O}}
\newcommand{\chapter}{\section}

\newcommand{\Int}{\operatorname{Int}}
\newcommand{\is}{\operatorname{IS}}
\newcommand{\Hom}{\operatorname{Hom}}
\newcommand{\Spec}{\operatorname{Spec}}
\newcommand{\codim}{\operatorname{codim}}
\newcommand{\pt}{\operatorname{pt}}
\newcommand{\ch}{\operatorname{ch}}
\newcommand{\td}{\operatorname{td}}
\newcommand{\Td}{\operatorname{Td}}
\newcommand{\var}{\operatorname{var}}
\newcommand{\mult}{\operatorname{mult}}
\newcommand{\id}{\operatorname{id}}

\newtheorem{theorem}{Theorem}[section]

\newtheorem{lemma}[theorem]{Lemma}
\theoremstyle{remark}
\newtheorem{remark}[theorem]{Remark}

\newtheorem{example}[theorem]{Example}
\theoremstyle{definition}

\linespread{1.2}

\bibliographystyle{plain}
\title[Positivity of local equivariant Hirzebruch class for toric varieties]{The positivity of local equivariant Hirzebruch class for toric varieties}
\author{Kamil Rychlewicz}
\address{Faculty of Mathematics, Informatics and Mechanics, University of Warsaw; 
\protect\linebreak
Institute of Science and Technology Austria, am Campus 1, 3400 Klosterneuburg, \nobreak{Austria},
{\tt kamil.rychlewicz@ist.ac.at}
}
\begin{document}
\begin{abstract}
The central object of investigation of this paper is the Hirzebruch class, a deformation of the Todd class, given by Hirzebruch (for smooth varieties) in his celebrated book \cite{Hirzebruch}. The generalization for singular varieties is due to Brasselet-Sch\"{u}rmann-Yokura. Following the work of Weber, we investigate its equivariant version for (possibly singular) toric varieties. The local decomposition of the Hirzebruch class to the fixed points of the torus action and a formula for the local class in terms of the defining fan are mentioned. After this review part (sections \ref{firstsec}--\ref{lastsec}), we prove the positivity of local Hirzebruch classes for all toric varieties (Theorem \ref{final}), thus proving false the alleged counterexample given by Weber.
\end{abstract}

\maketitle 

\section{Acknowledgments}

This paper is an abbreviation of the author's Master's Thesis written at Faculty of Mathematics, Informatics and Mechanics of University of Warsaw under the supervision of Andrzej Weber. The author owes many thanks to the supervisor, for introducing him to the topic and stating the main problem, for pointing the important parts of the general theory and for the priceless support regarding editorial matters.

\chapter{Introduction}

The Todd class, which appears in the formulations of the celebrated Hirzebruch-Riemann-Roch theorem, and more general Grothendieck-Riemann-Roch theorem, is originally defined for smooth complete varieties (or schemes) only. In \cite{BFM} Baum, Fulton and MacPherson gave a generalization for singular varieties, which in general is forced to lie in homology instead of cohomology. For simplicial toric varieties, an equivariant version of Hirzebruch-Riemann-Roch theorem was given by Brion and Vergne in \cite{BV} with a purely combinatorial approach, without employing the general algebro-geometric theory. In this case, the equivariant Todd class may actually be defined in cohomology. However, it has to be defined in homology for general toric varieties -- which is done by Brylinski and Zhang in \cite{BZ}.

An extension of the Todd genus, the $\chi_y$-genus of a smooth variety $X$ (and, in general, of any locally free sheaf) was defined by Hirzebruch in \cite{Hirzebruch}, as a polynomial of one variable $y$ that specializes to Todd genus under substitution $y=0$. Analogously, the full Todd class extends to the Hirzebruch class, defined in $H^*(X,\Q)[y]$. It specializes to the Todd class for $y=0$ and its highest component integrates to the $\chi_y$-genus.

In \cite{BSY} Brasselet, Sch\"{u}rmann and Yokura gave a strong generalization of the Hirzebruch class not only for singular varieties, but for morphisms of them: $Td_y(f:X\to M)\in H_*^{BM}(M,\Q)[y]$. Again, it is forced to lie in homology, and for $f=\id_X$ with $X$ smooth it reduces to the Hirzebruch class (after applying Poincar\'{e} duality) and for $M=\pt$ it reduces to $\chi_y(X)$.

This construction was investigated by Maxim and Sch\"{u}rmann in \cite{MaSc} with relation to toric varieties and later by Weber in \cite{Weber2016} in equivariant setting. Like for the Todd class in \cite{BV}, the formulas in terms of defining fans and classes of orbits were derived. By the localization theorem of Atiyah-Bott or Berline-Vergne, the computation is split to computations of local Hirzebruch classes for fixed points.

In this paper we give a positive answer to the question posed by Weber (\cite{Weber2016}) about positivity of local Hirzebruch classes for toric varieties. To this end, we employ the Weber's result for simplicial cones and combine it with inclusion-exclusion-type equality for triangulations of arbitrary polyhedral cones.

The positivity problem is parallel to analogous results for Thom polynomials and Chern classes, as well as the Huh's result of positivity of Chern-Schwartz-MacPherson class of Grassmannians.
There is still some space for further investigation, e.g. whether the local positivity holds in general for rational singularities.

\section{Notation}\label{firstsec}

The set of natural numbers $\N$ contains $0$. All the algebraic varieties we consider are defined over $\C$. All toric varieties are normal by definition. For any integral domain $R$, by $F(R)$ we denote its fraction field.

For any lattice $L$, i.e. a group isomorphic to $\Z^n$ for some integer $n$, by $L_\Q$ we denote $L\otimes \Q$ (and similarly for $\R$ instead of $\Q$). For a vector space $V$ and its subset $A$, by $A^\perp$ we denote the annihilator of $A$, i.e. $\{\phi \in V^*: \forall_{a\in A} \phi(a) = 0\}$. For a vector space $V$, $S^*(V)$ is the symmetric algebra of $V$, which is canonically isomorphic to the (graded) ring of polynomial functions on $V^*$.

For a torus $T$ we shall use the notation $M = \Hom(T,\C^*)$ for its character group and $N = \Hom(\C^*,T)$ for the group of its one-parameter subgroups. 

For a rational polyhedral cone $\sigma\subset N_{\R}$ by $\sigma^\vee$ we denote the dual cone $\sigma^\vee = \{ v \in M_\R : \langle v,w \rangle \ge 0 \text{ for all }w\in \sigma\}$. Then for a strongly convex rational polyhedral cone $\sigma\subset N_{\R}$ by $X_\sigma$ we denote the toric variety defined by it, i.e $X_\sigma = \Spec \C[ \sigma^\vee \cap M]$. Any set of generators $w_1$, $w_2$, \dots, $w_K$ of $\sigma^\vee\cap M$ defines an embedding $X_\sigma \to \C^K$.

If we now consider a fan $\Sigma$ over $N$, every cone $\sigma\in\Sigma$ gives rise to an inclusion $X_\sigma\hookrightarrow X_\Sigma$ as an open subset $U_\sigma$. Each of $U_\sigma$ contains a unique closed orbit of $T$ which (following notation from \cite{BV}) we denote by $\Omega_\sigma$.
It satisfies the equality $\dim \Omega_\sigma = \codim_{N_\R} \sigma$. For a fan $\Sigma$ by $\Sigma'$ we denote the set of the maximal cones.

Now suppose that we are given a smooth complete variety $X$ with an action of a torus $T$. The fibration $ET\times^T X \to BT$ (with fiber X) gives rise to the push-forward map (\cite{AB})
\begin{equation}\label{push}
\int_X: H_T^*(X) \to H_T^*(\pt),
\end{equation}
which is a homomorphism of $H_T^*(\pt)$-modules, decreasing grading by $2\dim_{\C}(X)$. This is the equivariant version of the classical integration of differential forms on an oriented (real) manifold $X$, which maps $H^*(X) \to H^*(\pt)$. The latter is nontrivial on $H^{\dim_\R(X)}(X)$ only, but the equivariant version may on contrary be nontrivial in higher gradations.

\section{Equivariant homology}

We shall make use of the equivariant cohomology throughout this paper, but also of the equivariant \emph{homology} as well. As the construction is less standard, we sketch it here. For details, we refer the reader to \cite{Ohmoto}, \cite{BZ} or to \cite{Edidin} for the original construction of equivariant Chow groups on which the equivariant homology is based. Suppose we are given a complex reductive group $G$. Then we may consider the Totaro's approximation (\cite{Edidin2}, \cite{Totaro}) $E_n \to B_n$ of the universal bundle $EG\to BG$. Using the approximation, we define the equivariant homology via Borel-Moore homology:

$$H_i^G(X) = H_{i+2d_n-2g}^{BM} (E_n \times^G X,\Q),$$
where $g = \dim_\C (G)$, $d_n = \dim_\C (E_n)$. It turns out (\cite{Edidin}) that $H_i^G(X)$ is independent of the choice of $n$, provided $n$ is large enough. It may happen that $H_i^G(X)$ is nonzero for negative $i$. In fact, $H_*^G(X)$ is endowed with the cap product $\cap: H_G^n(X) \otimes H_i^G(X) \to H_{i-n}^G(X)$, which makes $H_*^G(X)$ into a $H_G^*(X)$-module, but with the grading reversed.

For any complex variety $X$ of complex dimension $n$ the inclusion of its smooth locus $i:X_{smooth} \to X$ induces isomorphism $i^*:H_{2n}^{BM}(X) \to H_{2n}^{BM}(X_{smooth})$ (cf. \cite[Lemma 19.1.1]{Fulton2}), which makes it possible to define the orientation class $[X] \in H_{2n}^{BM} (X)$ as the preimage of the orientation class $[X_{smooth}] \in H_{2n}^{BM}(X_{smooth})$ (note that as we deal with the Borel-Moore homology, we do not need $X_{smooth}$ to be compact).
Then if $X$ is smooth, the Poincar\'{e} duality (i.e. cap product with the fundamental class $[X]\in H_{2d}^G(X)$) provides an isomorphism $H_i^G(X) \simeq H_G^{2d-i}(X)$ for $d=\dim_\C (X)$, thus showing that $H_*^G(X) \simeq H_G^*(X)$ as graded $H_G^*(X)$-modules, up to the grading reversion. For example, for $G = \left(\C^*\right)^d$ we have $H^G_*(\pt)\simeq \Q[t_1,\dots,t_n]$ and $t_1$, $t_2$, \dots, $t_n$ all belong to $H^G_{-2}(\pt)$, thus the equivariant homologies of the point exist in nonpositive dimensions only.

\section{Characteristic classes of singular varieties}

\subsection{Todd class}
The Todd class (see \cite{Hirzebruch}, \cite{Milnor}) $\td(X)$, which plays substantial role in the celebrated Hirzebruch-Riemann-Roch theorem and more general Grothendieck-Riemann-Roch theorem is originally defined for smooth varieties using the tangent bundle. It was generalized to singular varieties by Baum-Fulton-MacPherson in \cite{BFM} and satisfies a generalized Grothendieck-Riemann-Roch theorem (proved therein). However, the Baum-Fulton-MacPherson Todd class lies in (Borel-Moore) homology or in Chow groups of a variety $X$, unlike the standard Todd class, which is cohomological. For smooth varieties, the Poincar\'{e} duality enables one to identify them, but we shall make a distinction and denote the cohomological class by $\td(X)$ and the homological one by $\Td(X)$. For details of the construction, we refer the reader to \cite{BFM}.

\subsection{Hirzebruch class}\label{hirzsection}

The Hirzebruch's $\chi_y$-genus is defined in \cite[\S 15.5]{Hirzebruch} for a compact complex manifold $X$. It is equal to 
\begin{equation}
\label{hirz}
\int_X \td(X) \ch(\Omega_X^y)
\end{equation}
where $\Omega_X^y$ is defined as the formal sum  $\bigoplus\limits_{p=0}^{\dim_\C(X)}\Omega_X^p y^p$, with $\Omega_X^p$ being the sheaf of holomorphic $p$-forms on $X$. Now the integrand in \eqref{hirz} is called the \emph{Hirzebruch class} and denoted by $\td_Y(X) \in H^*(X)[y]$. The Hirzebruch class reduces to the Todd class under the substitution $y=0$. In \cite{Hirzebruch} Hirzebruch gave a generalization of Hirzebruch-Riemann-Roch theorem, which uses the Hirzebruch class instead of Todd class and generalized Chern character $\ch_y$ (\S 12.2 ibid.) to compute the $\chi_y$-characteristic of a bundle (\S 15.5 ibid.).

A generalization of the Hirzebruch class to all, possibly singular varieties was given by {\nobreak Brasselet}, Sch{\"u}rmann and Yokura in \cite{BSY}. Just like the Baum-Fulton-MacPherson Todd class mentioned above, the generalized Hirzebruch class belongs to Borel-Moore homology instead of cohomology. Precisely, the \emph{motivic} Hirzebruch class is defined for varieties over any given variety $X$, yielding a map $\Td_y:K_0(\var/X) \to H_*^{BM}(X)\otimes \Q[y]$. Here $K_0(\var/X)$ is the \emph{Grothendieck group of varieties over $X$} (cf. \cite{Lo},\cite{Bi}).

The motivic Hirzebruch class generalizes the (homological) Hirzebruch class of smooth varieties, i.e. if $X$ is smooth and complete and we consider $\id_X:X\to X$, then $\Td_y(\id_X) = \td_y(X)\cap [X] \in H_*(X)\otimes \Q[y]$. The motivic class also satisfies the naturality condition, i.e. for $f:X\to Y$ and a proper morphism $g:Y\to Z$
$$\Td_y(g\circ f: X\to Y\to Z) = g_* \Td_y(f:X\to Y). $$
In particular, if we consider $f = \id_X$ and $g:X\to\pt$ for smooth and complete variety $X$, then we get $\chi_y(X) = \Td_y(X\to\pt)$ under identification $H_0(X)\simeq H_0(\pt)$. Thus we can compute the $\chi_y$-genus using the map $X\to\pt$. In particular, it means that if $X$ can be split into a finite (set-theoretic) disjoint sum of constructible sets
$$X = \bigsqcup_{i=1}^n X_i,$$
then we get
\begin{equation}
\label{splitchi}
    \chi_y(X) = \sum_{i=1}^n \chi_y(X_i).
\end{equation}

As a special case of the motivic Hirzebruch class, for any (possibly singular) variety $X$ and identity $\id_X$ we get the (absolute) Hirzebruch class $\Td_y(id_X)\in H_*(X)$ of a variety $X$, which generalizes the homological Hirzebruch class $\td_y(X)\cap [X]$ for smooth complete varieties. In the same way, we can define the $\chi_y$-genus of any variety $X$ as $\Td_y(X\to\pt)\in H_0(\pt)\otimes \Q[y]\simeq \Q[y]$. So defined Hirzebruch class $\Td_y(\id_X)$ may not, in general, reduce to the Baum-Fulton-MacPherson's Todd class. It does, however, for a vast class of varieties (at least for varieties with du Bois singularities, cf. \cite[Example 3.2]{BSY}), which includes toric varieties.

\section{Todd class for toric varieties}

In \cite{BV} Brion and Vergne proved an equivariant Riemann-Roch theorem for complete simplicial toric varieties without appealing to general theory. In doing this, they introduced the equivariant Todd class for complete simplicial toric varieties, providing it with an explicit formula. The results were generalized in \cite{BZ} for any complete toric variety. We include here a brief summary of the results.

Suppose we are given a $T$-linearized coherent sheaf $\F$ on a simplicial toric variety $X$ with an action of a $d$-dimensional torus. \emph{$T$-linearized coherent sheaf} means a coherent sheaf endowed with an action of $T$, which is compatible with $\O_X$-module structure on $\F$. Then in \cite{BV} the \emph{Chern character} $\ch^T(\F)$ of $\F$ is defined in the completion $\hat{H}^*_T(X)$ of $H^*_T(X)$.

If $\F$ is locally free, it defines a $T$-equivariant vector bundle on $X$, which can be pulled back to $ET \times X$ and then divided by the diagonal action of $T$, yielding a bundle over $ET\times^T X$. The usual Chern character of this bundle then coincides with the one mentioned above. The \emph{equivariant Todd class} $\td^T(X) \in \hat{H}_T(X)$ is defined for any complete simplicial toric variety $X$ such that the following equivariant Riemann-Roch theorem (\cite[Theorem 4.1]{BV}) is satisfied:

\begin{theorem}
 For any $T$-linearized coherent sheaf $\F$ on a complete simplicial toric variety $X$ the following holds
 $$\chi^T(X,\F) = \int_X \ch^T(\F) \td^T(X),$$
where $\chi^T(\F)$ is the \emph{equivariant Euler characteristic} of $\F$ as defined in \cite[1.3]{BV}.
\end{theorem}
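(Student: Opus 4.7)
The plan is to use equivariant localization to reduce both sides to sums of local contributions at the isolated torus-fixed points, which correspond to the maximal cones $\sigma\in\Sigma'$, and then match these contributions by a direct combinatorial calculation on each affine chart $U_\sigma$. First I would reduce to the case of an equivariant line bundle: using that $X$ is covered by the $T$-stable affines $U_\sigma$ and that each $T$-linearized coherent sheaf admits on such a chart a finite equivariant resolution by sums of $T$-linearized line bundles (coming from characters in $M$), additivity in exact sequences shows it suffices to prove the identity for $\F = L$ a $T$-equivariant line bundle.

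Next I would compute each side locally. For the left-hand side, the equivariant Euler characteristic $\chi^T(X,L)$ can be written, via the \v Cech complex on the affine cover $\{U_\sigma\}$ together with the character decomposition of $H^0(U_\sigma,L)$, as a sum of characters indexed by lattice points in the polytope $P_L\subset M_\R$ associated to $L$. For the right-hand side, localization (Atiyah--Bott / Berline--Vergne) gives
\begin{equation*}
\int_X \ch^T(L)\,\td^T(X) \;=\; \sum_{\sigma\in\Sigma'}\frac{\ch^T(L)|_{x_\sigma}\cdot \td^T(X)|_{x_\sigma}}{e^T(T_{x_\sigma}X)},
\end{equation*}
in the fraction field $F(H_T^*(\pt))$. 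I would take this as the \emph{definition} of $\td^T(X)$ locally at each fixed point: prescribe $\td^T(X)|_{x_\sigma}$ so that its pairing against $e^T(T_{x_\sigma}X)^{-1}$ realizes the expected local Todd contribution of a (possibly singular) simplicial cone, and then argue using the injectivity part of the localization theorem that these local prescriptions glue to a well-defined class in $\hat H_T^*(X)$.

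The heart of the argument is then a local verification for an affine simplicial toric chart $U_\sigma$. For a smooth cone with primitive generators giving weights $\alpha_1,\dots,\alpha_n\in M$, I would check the classical formal identity
\begin{equation*}
\sum_{m\in\sigma^\vee\cap M} e^m \;=\; \prod_{i=1}^n \frac{1}{1-e^{-\alpha_i}} \;=\; \frac{1}{\prod_i\alpha_i}\cdot \prod_i \frac{\alpha_i}{1-e^{-\alpha_i}},
\end{equation*}
recognizing the last product as the local Todd contribution and $\prod_i\alpha_i$ as $e^T(T_{x_\sigma}X)$. For a genuinely simplicial (but singular) cone, $U_\sigma$ is the quotient of $\C^n$ by the finite abelian group $G_\sigma = N/\langle \text{generators}\rangle$, and the corresponding local formula is obtained by averaging the smooth formula over the characters of $G_\sigma$, as in \cite{BV}. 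Summing the local contributions over $\sigma\in\Sigma'$ and collecting the lattice points $m$ that appear in each cone $\sigma^\vee$ reproduces precisely the combinatorial expression for $\chi^T(X,L)$ obtained in the first step.

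The main obstacle I expect is twofold. First, the identities above live in completions and have to be interpreted carefully: each local sum over lattice points is, a priori, only a formal series, while the localization identity lives in $F(H_T^*(\pt))$, and one must justify the passage between these settings (the standard device is to work generically in $t\in\mathfrak t$ and analytically continue). Second, in the non-smooth simplicial case one has to keep accurate track of the averaging over $G_\sigma$ and of the way in which lattice points in $\sigma^\vee\cap M$ re-assemble from the $G_\sigma$-isotypic pieces, so that the inclusion–exclusion of contributions from different cones $\sigma\in\Sigma'$ yields exactly the characters of $H^*(X,L)$ with the correct signs — this is essentially the combinatorial core of the Brion--Vergne argument.
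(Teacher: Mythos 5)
First, a point of calibration: the paper does not prove this statement at all --- it is quoted verbatim as \cite[Theorem 4.1]{BV}, and in the paper's own framing the equivariant Todd class $\td^T(X)$ is \emph{defined} to be the class making this identity hold. Consequently the real mathematical content of the theorem is the \emph{existence} of such a class in $\hat{H}^*_T(X)$, together with the explicit local formula $\mult(\sigma^\vee)^{-1}\prod_i(-m_i)\sum_{m\in\sigma^\vee}e^m$; all of that is deferred to Brion--Vergne. Your sketch follows the same general localization strategy as \cite{BV}, so there is no divergence of method to report --- but it has gaps that matter precisely at the point where the theorem has content.

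The two most serious gaps are these. (i) Your reduction to line bundles relies on every $T$-linearized coherent sheaf admitting a \emph{finite} equivariant resolution by sums of character twists of $\O$ on each chart $U_\sigma$; this fails on singular affine toric varieties, which generally have infinite global dimension, so additivity in exact sequences does not reduce the statement to line bundles. (The Brion--Vergne argument instead computes $\chi^T(X,\F)$ for arbitrary $\F$ directly from the \v{C}ech complex and the character decomposition of $\Gamma(U_\sigma,\F)$, using their summability calculus.) (ii) You propose to \emph{define} $\td^T(X)|_{x_\sigma}$ so that the localization formula outputs the desired local contribution, and then to invoke ``the injectivity part of the localization theorem'' to conclude that these prescriptions glue. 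Injectivity of the restriction to fixed points can only show that a class with given restrictions is unique; it cannot produce one. To show the prescribed local data come from an honest element of $\hat{H}^*_T(X)$ (rather than merely of the localized module $L^{-1}\hat{H}^*_T(X)$) you need either the image characterization of the localization map (compatibility conditions along the codimension-one cones) or an independent global construction --- and that is exactly the nontrivial part of \cite{BV}. Two smaller issues: at a singular fixed point $e^T(T_{x_\sigma}X)$ is not defined, so the localization formula you write must be replaced by its orbifold version with the averaging over $G_\sigma$ built in from the start, not added afterwards; and the identification of $\chi^T(X,L)$ with a sum over lattice points of a polytope presumes higher cohomology vanishing, whereas the general case requires the signed (Brion/Lawrence--Varchenko type) decomposition, which is itself the combinatorial heart of the proof rather than bookkeeping.
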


The formulas for $\td^T(X)$ are given in terms of the fan $\Sigma$ defining $X$. Before stating them, we first outline the rules of summing infinite series as provided in \cite[1.3]{BV}. Let $\Z[M]$ be the group ring of $M$ over $\Z$ and let $\Z[[M]]$ be the $\Z[M]$-module of all formal power series
$$f(e) = \sum_{m\in M} a_m e^m,$$
possibly infinite in every direction. We call such $f$ \emph{summable} if there exists $P(e)\in \Z[M]$ and a finite sequence $m_i\in M\setminus\{0\}$ such that
$$f(e)\prod (1-e^{m_i}) = P(e).$$
Then we call the element $\frac{P}{\prod (1-e^{m_i})}\in F(\Z[M])$ the sum of $f(e)$. It is easy to see that this element does not depend on the choice of $P$ and $m_i$'s. As an example, we have
$$\sum_{k=0}^\infty e^{km} = \frac{1}{1-e^m}$$
and, in general, if $m_1, m_2, \dots, m_n\in M$ are linearly independent, then
\begin{equation}
\label{ogolnasuma}
    \sum_{k_1,k_2,\dots,k_n\in \N} e^{\sum_{i=1}^n k_i m_i} = \prod_{i=1}^n \frac{1}{1-e^{m_i}}.
\end{equation}

Now by a result from \cite{BV}, for any $d$-dimensional cone $\sigma\in\Sigma$ the restriction of the Todd class to $X_\sigma$, which is a series in $\hat{S}(M_\Q)$ (the completion of $S^*(M_\Q)$), is given by

$$\td_\sigma^T(X) = \mult\left(\sigma^\vee\right)^{-1} \prod_{i=1}^d (-m_i) \sum_{m\in\sigma^\vee} e^m,$$
with $m_i$ being the generators of rays of $\sigma^\vee$ and $\mult(\sigma^\vee)$ being the index of lattice $\bigoplus\limits_{i=1}^d \Z m_i$ in $\Z\sigma^\vee$. By definition $e^m = \sum\limits_{i=0}^\infty \frac{m^i}{i!}$ and hence the series $\sum\limits_{m\in\sigma^\vee} e^m$ is not summable in $\hat{S}(M_\Q)$ as a series in $m$. It has to be considered as an element of  $\Z[[M]]$ and summed according to the rules given above:

$$\sum_{m\in\sigma^\vee} e^m = \left(\sum_{m\in C\cap M} e^m\right) \prod_{i=1}^d \frac{1}{1-e^{m_i}},$$
where $C$ is the cube $\left\{\sum\limits_{i=1}^d \theta_i m_i : \theta_i\in [0,1)\right\}$.

The sum can then be mapped to a Laurent series in $F(\hat{S}(M_\Q))$ by mapping $e^m$ to $\sum\limits_{i=0}^\infty \frac{m^i}{i!}$. It will contain factors of the form 

$$\frac{1}{1-e^{m_i}} = \frac{1}{m_i} \cdot \frac{1}{-1-\frac{m_i}{2}-\frac{m_i^2}{6}-\dots}.$$
After multiplying it by $\prod_{i=1}^d (-m_i)$ we get an element of $\hat{S}(M_\Q)$.

In \cite{BZ} and \cite{Edidin} the homological equivariant Todd class $\Td^T(X)$, which is equal to $\td^T(X) \cap [X]$ for $[X]$ being the orientation class of $X$ (under the assumption of smoothness of $X$), is defined for any $T$-variety via Baum-Fulton-MacPherson's Todd class.

The formula for so defined equivariant homological Todd class of any complete toric variety is given in \cite[Theorem 9.4]{BZ}. By $L$ we denote the multiplicative system in $H^*_T(\pt)$ generated by $m\in M\setminus\{0\} \subset S^*(M) = H^*_T(\pt)$. Then we have:

\begin{theorem}\label{Brylinski}
 For a complete toric variety $X=X_\Sigma$ we have
 $$L^{-1}\Td^T(X) = \sum_{\sigma\in\Sigma'}\sum_{m\in\sigma^\vee} e^m [\Omega_\sigma].$$
\end{theorem}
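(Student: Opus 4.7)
My plan is to verify the formula after localizing at $L$ by comparing restrictions to the isolated $T$-fixed points. Since $X_\Sigma$ is complete, every maximal cone $\sigma\in\Sigma'$ is $d$-dimensional, so $\Omega_\sigma$ is an isolated $T$-fixed point and together these exhaust $X_\Sigma^T$. The equivariant localization theorem then gives an isomorphism
$$L^{-1}H^{T}_*(X_\Sigma)\;\xrightarrow{\sim}\;\bigoplus_{\sigma\in\Sigma'} L^{-1}H^{T}_*(\Omega_\sigma),$$
under which the orbit class $[\Omega_\tau]$ is a generator supported entirely in the $\tau$-summand (its restriction to $\Omega_\sigma$ for $\sigma\neq\tau$ vanishes after inverting $L$, since the supports are disjoint). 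Consequently, the theorem reduces to verifying the claim componentwise at each $\Omega_\sigma$.

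Fix a maximal cone $\sigma$. Using the affine open cover $\{U_\sigma\}$ and functoriality of the equivariant Baum-Fulton-MacPherson transformation, the restriction of $\Td^T(X_\Sigma)$ to $U_\sigma$ is $\Td^T(U_\sigma)=\tau^T([\O_{U_\sigma}])$. The equivariant $K$-class of $\O_{U_\sigma}$ has $T$-character equal to the Hilbert series
$$\sum_{m\in\sigma^\vee\cap M}e^m$$
of the semigroup algebra $\C[\sigma^\vee\cap M]$, interpreted via the summation rules recalled above. The content of the theorem is then that $\tau^T$ converts this $K$-theoretic character into the coefficient of $[\Omega_\sigma]$ in localized equivariant Borel-Moore homology. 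I would check this first in the simplicial case, where it follows by Poincar\'e duality on the smooth locus from the cohomological formula for $\td^T_\sigma(X)$ recalled above from \cite{BV}, and then handle the general case by taking a simplicial subdivision $\widetilde\Sigma$ of $\Sigma$, applying the covariance of $\tau^T$ under the proper birational toric morphism $\pi\colon X_{\widetilde\Sigma}\to X_\Sigma$ (which satisfies $R\pi_*\O=\O$ since toric varieties have rational singularities), and combining the local contributions over the maximal cones of $\widetilde\Sigma$ subdividing $\sigma$.

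The main obstacle will be the combinatorial bookkeeping in this last pushforward. When $\sigma$ is subdivided into simplicial cones $\tau_1,\dots,\tau_k$, the naive sum $\sum_{i=1}^k\sum_{m\in\tau_i^\vee\cap M}e^m$ overcounts lattice points on the intersections $\tau_i^\vee\cap\tau_j^\vee$; showing that this collapses, in the localization of $\Z[[M]]$ used above, to the single series $\sum_{m\in\sigma^\vee\cap M}e^m$ requires an inclusion-exclusion identity in $F(\Z[M])$ capturing the additivity of lattice-point counting on polyhedral decompositions of $\sigma^\vee$. This identity (parallel to the one the paper later deploys for the Hirzebruch class) is the central combinatorial input, and packages the contribution of all the extra cones of $\widetilde\Sigma$ into the single term indexed by $\sigma$.
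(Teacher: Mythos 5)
First, a point of comparison: the paper does not prove this statement at all --- it is quoted verbatim from \cite[Theorem 9.4]{BZ} as background --- so there is no internal proof to measure your argument against. Judged on its own, your outline follows the standard route: localization to the fixed points $p_\sigma=\Omega_\sigma$ of the maximal cones, the local identification of $\Td^T(U_\sigma)$ with the Baum--Fulton--MacPherson transformation applied to $[\O_{U_\sigma}]$ (whose character is the Hilbert series of $\C[\sigma^\vee\cap M]$), the simplicial case via \cite{BV} and Poincar\'e duality, and reduction of the general case to the simplicial one through a subdivision $\pi\colon X_{\widetilde\Sigma}\to X_\Sigma$ using $R\pi_*\O=\O$ and covariance of the transformation. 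These ingredients are all sound and close to how the result is actually established in the literature.

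The one step that would fail as you describe it is the final combinatorial identity. After pushing forward, the coefficient of $[\Omega_\sigma]$ is $\sum_i\sum_{m\in\tilde\tau_i^\vee\cap M}e^m$, where the $\tilde\tau_i$ are the maximal simplicial cones subdividing $\sigma$. But the dual cones $\tilde\tau_i^\vee$ do \emph{not} form a polyhedral decomposition of $\sigma^\vee$: since $\tilde\tau_i\subset\sigma$, each $\tilde\tau_i^\vee$ \emph{contains} $\sigma^\vee$, and distinct $\tilde\tau_i^\vee$, $\tilde\tau_j^\vee$ overlap in full-dimensional cones, so there is no ``additivity of lattice-point counting on a decomposition of $\sigma^\vee$'' to invoke, and a naive inclusion--exclusion over the overlaps does not terminate in the form you suggest. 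The correct mechanism is different: one has $\sum_i\mathbf{1}_{\tilde\tau_i^\vee}=\mathbf{1}_{\sigma^\vee}+(\text{indicator functions of cones containing lines})$, and the generating-function valuation $C\mapsto\sum_{m\in C\cap M}e^m\in F(\Z[M])$ annihilates every cone containing a line. (Test case: subdivide $\sigma=\R_{\ge0}^2$ by the ray through $(1,1)$; the two dual cones intersect exactly in $\sigma^\vee$ and their union is the half-plane $\{x+y\ge0\}$, whose generating function vanishes, so the two series sum to that of $\sigma^\vee$ on the nose.) This vanishing-on-lines property --- the Brion/Lawrence--Varchenko valuation argument --- is the actual combinatorial input, and it is genuinely different from the face-by-face identity \eqref{toprove} that the paper later proves for the interior sums, which lives in $N$ rather than $M$ and never leaves the cone being subdivided. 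Without replacing your stated identity by this one, the last step of the plan does not go through.
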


The left-hand side is the image of $\Td^T(X)$ in $L^{-1}\hat{H}_*^T(X)$. For the same reasons that were described above, the localization is needed to make sure the right-hand side summands make sense, as the sums of type $\sum_{m\in\sigma^\vee} e^m$ contain factors of the form 
$$\frac{1}{1-e^m} = \frac{1}{m} \cdot \frac{1}{-1-\frac{m}{2}-\frac{m^2}{6}-\dots}$$
for $m\in M$. Thus the right-hand side only makes sense when we localize to $L^{-1}\hat{H}_*^T(X)$. As a part of the theorem, however, the sum actually lies in $\hat{H}_*^T(X)$.

For a complete simplicial toric variety there exists the Poincar\'{e} duality (see \cite[Lemma 8.10]{BZ}) -- an isomorphism of graded $H^*_G(\pt)$-modules $PD : R_\Sigma \to H_*(X)$ ($PD:(R_\Sigma)_k \simeq H_{2d-2k}^T(X)$), which generalizes the usual Poincar\'{e} duality $\omega \mapsto \omega\cap [X]$ for smooth varieties. For any $\sigma\in\Sigma'$ with $m_1$, $m_2$, \dots, $m_d$ being the generators of rays of $\sigma^\vee$ we may consider the polynomial
$$\phi_{\sigma} = (-1)^d \mult(\sigma^\vee)^{-1} \prod_{i=1}^d m_i$$
defined on $\sigma$ and extend it as zero to other cones, getting $\phi_\sigma \in R_\Sigma$. Then $PD(\phi_\sigma) = [\Omega_\sigma]$, thus Theorem \ref{Brylinski} agrees with the formula for $\Td_\sigma^T(X)$ given above.

\section{Hirzebruch class for toric varieties}\label{lastsec}

The Hirzebruch class was defined in motivic version for singular varieties in \cite{BSY}, as described in \S \ref{hirzsection}. We are now concerned with the computations for toric varieties. For the nonequivariant version, it is easy to observe that for a torus $(\C^*)^d$ its $\chi_y$ genus is equal to $(-1-y)^d$ and then, using the additivity in \eqref{splitchi} 
we can easily express the $\chi_y$-genus of a toric variety $X_\Sigma$ as
$$\sum_{\sigma\in\Sigma} (-1-y)^{\codim \sigma}.$$

The full Hirzebruch class $\Td_y(\id_X)$ is more challenging. In \cite{MaSc} formulas for the Hirzebruch class was developed in the case of toric varieties. We are interested in the equivariant version, which was introduced in \cite{Weber2016}. Just like the Todd class defined in \cite{BZ} for general toric setting, the Hirzebruch class is now an element of (equivariant) homology instead of cohomology.

Now we include a short summary of the results from \cite{Weber2016}, regarding the equivariant Hirzebruch class for toric varieties. Suppose we are given an equivariant map $X\to M$ of (possibly singular) varieties with torus action. Then, analogously to the equivariant Todd class, we can consider the equivariant Hirzebruch class
$$\Td^T_y(X\to M) \in \hat{H}^T_*(M)[y].$$
The completion of $H^T_*(M)$ has to be understood as taking the product of all gradations. 

So defined equivariant $\chi_y$-genus (for $M=\pt$) does not, in fact, yield any new information about $X$ (compared to the non-equivariant version). By rigidity theorem (\cite{Musin}, \cite{Weber2016}) $\chi^T_y(X) = \Td^T_y(X\to\pt)$ is nontrivial in the zeroth gradation ($H^T_0(\pt)[y] = \Q[y]$) only, being equal to the nonequivariant genus.

For the full Hirzebruch class, the following result (\cite[Theorem 11.3]{Weber2016}) is proven. Using the Atiyah-Bott (\cite{AB}) or Berline-Vergne (\cite{Berline}) localization formula, the computation of the equivariant Hirzebruch class is reduced to the computation of local classes for the fixed points of torus action. Note that to be able to use the localization theorem and to restrict to fix points, we have to work with cohomology instead of homology. For this sake, we embed the given variety in a smooth one and compute the class of the embedding instead of the identity.

\begin{theorem}
\label{formula}
Let $X_\Sigma$ be a complete toric variety and $X_\Sigma\to M$ an embedding in a smooth variety. For a maximal cone $\sigma\in \Sigma$ the restriction of the Hirzebruch class to the fixed point corresponding to $\sigma$ is equal to
$$\Td^T_y(X_\Sigma\to M)|_{p_\sigma} = \sum_{\tau \prec \sigma} (1+y)^{\codim \tau} \sum_{m\in \Int(\sigma^\vee \cap \tau^\perp)} e^{m} [\Omega_\sigma]. $$
\end{theorem}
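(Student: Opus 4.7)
My plan is to combine the scissor additivity of the motivic Hirzebruch class (the equivariant extension of \eqref{splitchi}) with the Brylinski--Zhang local Todd formula (Theorem~\ref{Brylinski}) and equivariant localization on the smooth ambient $M$. Decomposing $X_\Sigma = \bigsqcup_{\tau \in \Sigma} \Omega_\tau$, additivity yields
$$\Td^T_y(X_\Sigma \hookrightarrow M) = \sum_{\tau \in \Sigma} \Td^T_y(\Omega_\tau \hookrightarrow M),$$
and only $\tau \prec \sigma$ contribute at $p_\sigma$ since each summand is supported on $V(\tau) := \overline{\Omega_\tau}$. It therefore suffices to establish the single-orbit identity
$$\Td^T_y(\Omega_\tau \hookrightarrow M)\big|_{p_\sigma} = (1+y)^{\codim \tau}\sum_{m \in \Int(\sigma^\vee \cap \tau^\perp)} e^m\,[\Omega_\sigma].$$

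Since $\Omega_\tau \cong (\C^*)^{\codim \tau}$ is smooth with trivial equivariant tangent bundle of rank $\codim \tau$ and zero characters (the $T$-action on $T/T_\tau$ fixes each tangent direction pointwise), its cohomological Hirzebruch class is the limit $x_i \to 0$ of the Chern-root product $\prod x_i(1+y e^{-x_i})/(1-e^{-x_i})$, which equals $(1+y)^{\codim \tau}$. Hence
$$\Td^T_y(\Omega_\tau \hookrightarrow M) = (1+y)^{\codim \tau}\,[\Omega_\tau]^M,$$
with $[\Omega_\tau]^M \in H_*^T(M)$ the equivariant Borel--Moore class, and the single-orbit identity reduces to the purely homological claim $[\Omega_\tau]^M|_{p_\sigma} = \sum_{m \in \Int(\sigma^\vee \cap \tau^\perp)} e^m\,[\Omega_\sigma]$.

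For this homological claim, the Todd class of the trivial bundle being $1$ gives $\Td^T(\Omega_\tau) = [\Omega_\tau]$, so additivity together with Theorem~\ref{Brylinski} applied to the complete toric subvariety $V(\tau)$ (whose character lattice is $M \cap \tau^\perp$ and whose dual cone at $p_\sigma$ is $\sigma^\vee \cap \tau^\perp$) produces the linear system
$$\sum_{\tau \prec \tau' \prec \sigma}[\Omega_{\tau'}]\big|_{p_\sigma} = \Td^T(V(\tau))\big|_{p_\sigma} = \sum_{m \in \sigma^\vee \cap \tau^\perp} e^m\,[\Omega_\sigma].$$
Möbius inversion on the Eulerian face poset $\{\tau' : \tau \prec \tau' \prec \sigma\}$, where $\mu(\tau,\tau')=(-1)^{\dim \tau' - \dim \tau}$, then extracts each $[\Omega_\tau]|_{p_\sigma}$; this matches the desired relative-interior sum by the face decomposition
$$\sigma^\vee \cap \tau^\perp = \bigsqcup_{\tau \prec \tau' \prec \sigma} \Int(\sigma^\vee \cap \tau'^\perp),$$
whose own Möbius inversion gives the same combinatorial formula in terms of the $\sum_{m \in \sigma^\vee \cap \tau'^\perp} e^m$.

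The main obstacle is justifying that restriction to the fixed point $p_\sigma$ commutes with motivic inclusion--exclusion in $K_0^T(\var/M)$ and that the sign conventions in Möbius inversion align with the homological pushforwards; one must also verify that the formula is independent of the smooth ambient $M$, which follows from naturality of $\Td^T_y$ under proper pushforward, and that the equivariant Borel--Moore restriction of non-proper subvarieties via Atiyah--Bott localization on $M$ is well-behaved.
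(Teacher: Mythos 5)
First, a point of comparison: the paper does not prove Theorem \ref{formula} at all --- it is quoted from \cite[Theorem 11.3]{Weber2016} --- so there is no in-paper proof to match against. Your overall strategy (decompose $X_\Sigma$ into orbits, use additivity of the motivic transformation, observe that only $\tau\prec\sigma$ contribute at $p_\sigma$, reduce to a single orbit, and recover the interior-lattice-point sums from Theorem \ref{Brylinski} applied to the orbit closures $V(\tau)=\overline{\Omega_\tau}$ by inverting the unitriangular system coming from $\sigma^\vee\cap\tau^\perp=\bigsqcup_{\tau\prec\tau'\prec\sigma}\Int(\sigma^\vee\cap\tau'^\perp)$) is essentially the route taken in \cite{MaSc} and \cite{Weber2016}, and the inversion step is sound: uniqueness of the solution of an upper-unitriangular system already does the job, no explicit M\"obius function needed.

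The genuine gap is the single-orbit factorization $\Td^T_y(\Omega_\tau\hookrightarrow M)=(1+y)^{\codim\tau}\,[\Omega_\tau]^M$ with $[\Omega_\tau]^M$ independent of $y$. You deduce it from the triviality of the equivariant tangent bundle of $\Omega_\tau\simeq(\C^*)^{\codim\tau}$, but the normalization axiom of the Brasselet--Sch\"urmann--Yokura transformation, $\Td_y(\id_U)=\td_y(TU)\cap[U]$ for $U$ smooth, can only be transported along \emph{proper} morphisms, and $\Omega_\tau\hookrightarrow M$ is a non-closed, hence non-proper, embedding. The non-properness is exactly where the boundary corrections (the passage from all lattice points to interior lattice points) come from, and those corrections carry their own $y$-dependence which your argument does not control. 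Concretely, for $\C^*\hookrightarrow\PP^1$ one has
$$\Td_y(\C^*\hookrightarrow\PP^1)=\td_y(T\PP^1)\cap[\PP^1]-2[\pt]=(1+y)[\PP^1]+(1-y)[\pt]-2[\pt]=(1+y)\bigl([\PP^1]-[\pt]\bigr),$$
so the clean $(1+y)$-prefactor does emerge, but only because the boundary term $-2[\pt]$ conspires with the $(1-y)[\pt]$ contributed by $T\PP^1$; it is not a formal consequence of $\td_y$ of a trivial bundle being $(1+y)^{\dim}$. In general this step is precisely the key lemma of \cite{MaSc}, namely $\Td_y([\Omega_\tau\hookrightarrow X])=(1+y)^{\codim\tau}$ times the Todd class of the canonical sheaf of $V(\tau)$, proved via a toric log-resolution of $(V(\tau),\partial V(\tau))$ and the motivic Chern class of the complement of a normal crossing divisor (or, in the equivariant affine chart, by multiplicativity from the one-dimensional computation $\Td^T_y([\C^*\to\C])|_0=(1+y)\sum_{k\ge1}e^{-kt}$). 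You flag this obstacle yourself in your closing paragraph, but as written the proposal assumes exactly the one statement that carries all of the $y$-dependence of the theorem; the rest of the argument only pins down the $y=0$ specialization.
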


The summation of infinite series is again done with compliance to rules outlined in \cite{BV}. For any cone $\sigma$ we use the notation $\Int(\sigma)$ to denote its \emph{relative interior}, i.e. (the set of lattice points in) the topological interior of $\sigma$ in the vector space $\R\sigma$. It amounts to subtracting all the proper faces from a cone. Note that if we substitute $y=0$ to the above theorem, we get exactly the formula from Theorem \ref{Brylinski}.

\begin{remark}
We performed the above computations for an embedding $X\to M$ of $X$ into a smooth variety $M$. There are two caveats we should mention. First, we should know that there exists such $M$, with isolated fixed points of $T$ action. For the computation of the local classes for cones it is however enough to embed only the suitable affine parts of $X$ in smooth varieties. This is always possible, as the affine parts embed, by definition, in the affine spaces.

Second, although the right-hand side in Theorem \ref{formula} is a priori defined as a homology (or cohomology) class of $M$, it is the push-forward of analogous class in homology of $X$. It does not follow from the definitions that it does not depend on the choice of $M$ and embedding $X\to M$, but it is known that is does not. The formula from the theorem demonstrates it for toric varieties.
\end{remark}

\section{Positivity of local Hirzebruch class}

\subsection{Motivation}

As mentioned in \cite{Weber2016},
there is a positivity condition that holds for Thom polynomials of invariant varieties in toric representations. Similarly, it is proven in \cite{Weber2016} that the Hirzebruch class can be represented as a polynomial with nonnegative coefficients for 
simplicial toric varieties (\cite[Theorem 13.1]{Weber2016}):

\begin{theorem}
Let $X$ be a toric variety defined by a $d$-dimensional simplicial cone $\sigma\in N$. Let $w_1$, $w_2$, \dots, $w_K$ be the generators of the dual cone $\sigma^\vee\in M$, with $w_1$, $w_2$, \dots, $w_k$ being the primitive vectors on the rays of $\sigma^\vee$. Then the Hirzebruch class of the inclusion of the open orbit $td_y^T(\Omega_{\{0\}}\to\C^K)$, determined by choice of generators $w_1$, $w_2$, \dots, $w_K$ is of the form
\begin{equation} 
\delta^d \prod_{i=1}^k \frac{1}{S_{w_i}} \cdot P(\{S_{w_i}\}_{i=1,2,\dots,K}) [\Omega_\sigma],
\end{equation}
 where $\delta = -y-1$, $P$ is a polynomial with nonnegative coefficients and $S_{w_i} = e^{w_i} - 1$.
\end{theorem}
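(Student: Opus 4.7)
The strategy is to apply Theorem \ref{formula} to the single stratum corresponding to the open dense orbit and then rewrite the resulting lattice-point generating function. Assuming $\sigma$ is full-dimensional (the general case reduces to this by factoring out a torus), the inclusion $\Omega_{\{0\}} \hookrightarrow \C^K$ corresponds to the face $\tau = \{0\}$, for which $\codim \tau = d$ and $\tau^\perp = M_\R$. Theorem \ref{formula} therefore gives
$$\Td^T_y(\Omega_{\{0\}} \to \C^K)|_{p_\sigma} = (1+y)^d \sum_{m \in \Int(\sigma^\vee) \cap M} e^m \,[\Omega_\sigma].$$
Since $\sigma$ is simplicial, the dual $\sigma^\vee$ is also simplicial of dimension $d$, so in the notation of the statement $k = d$.

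The main combinatorial step is the classical fundamental parallelepiped decomposition. Define
$$C_0 = \left\{\sum_{i=1}^d \theta_i w_i : 0 < \theta_i \leq 1\right\}.$$
Given $m = \sum \mu_i w_i \in \Int(\sigma^\vee) \cap M$ with all $\mu_i > 0$, setting $n_i = \lceil \mu_i \rceil - 1 \in \N$ yields a unique decomposition $m = c + \sum n_i w_i$ with $c \in C_0 \cap M$. Summing geometric series in each $n_i$ according to the conventions recalled earlier produces
$$\sum_{m \in \Int(\sigma^\vee) \cap M} e^m = \left(\sum_{c \in C_0 \cap M} e^c\right) \prod_{i=1}^d \frac{1}{1 - e^{w_i}} = (-1)^d \left(\sum_{c \in C_0 \cap M} e^c\right) \prod_{i=1}^d \frac{1}{S_{w_i}}.$$
Multiplying by $(1+y)^d$ and invoking $(-1)^d (1+y)^d = \delta^d$ supplies the desired prefactor $\delta^d \prod_{i=1}^d S_{w_i}^{-1}$.

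It remains to show that the finite sum $\sum_{c \in C_0 \cap M} e^c$ is itself a polynomial in $S_{w_1}, \ldots, S_{w_K}$ with nonnegative coefficients. This is the point at which the extra generators $w_{d+1}, \ldots, w_K$ do real work: although a given $c \in C_0$ need not be a $\Z_{\geq 0}$-combination of the ray generators $w_1, \ldots, w_d$ alone, every $c$ lies in $\sigma^\vee \cap M$, which by hypothesis is generated as a semigroup by the full list $w_1, \ldots, w_K$. Choosing any decomposition $c = \sum_{i=1}^K \alpha_i^{(c)} w_i$ with $\alpha_i^{(c)} \in \N$ and substituting $e^{w_i} = 1 + S_{w_i}$ gives
$$e^c = \prod_{i=1}^K (1 + S_{w_i})^{\alpha_i^{(c)}},$$
which expands by the binomial theorem into a polynomial with nonnegative coefficients. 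Summing over the finite set $C_0 \cap M$ delivers $P$.

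No serious obstacle is expected: the argument is essentially a direct unpacking of Theorem \ref{formula}. The conceptual content lies in the recognition that the extra semigroup generators beyond the rays are precisely what allow the "numerator" sum $\sum_{c \in C_0 \cap M} e^c$ to be expressed with manifestly nonnegative coefficients; using only the ray generators one would in general be forced into non-integer exponents and the binomial expansion would fail.
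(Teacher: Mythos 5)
Your proof is correct. Note first that the paper itself does not reprove this statement (it is quoted from Weber); the closest the paper comes is its own general machinery, namely Lemma \ref{lemgl} (the parallelepiped decomposition for \emph{closed} simplicial cones) combined with the triangulation/Euler-characteristic inclusion--exclusion of Theorem \ref{main} to pass to interiors of arbitrary cones. Your argument is the natural ``open'' twin of Lemma \ref{lemgl}: by replacing the half-open box $\{\sum\theta_i w_i:\theta_i\in[0,1)\}$ with $C_0=\{\sum\theta_i w_i:\theta_i\in(0,1]\}$ you get a direct bijection $\Int(\sigma^\vee)\cap M\leftrightarrow (C_0\cap M)\times\N^d$ and never need the inclusion--exclusion over faces at all. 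This is cleaner for the simplicial case, and all the individual steps check out: the single-orbit term of Theorem \ref{formula} with $\tau=\{0\}$ gives $(1+y)^d\sum_{m\in\Int\sigma^\vee}e^m$; the ceiling-function decomposition $n_i=\lceil\mu_i\rceil-1$ is a genuine bijection onto $C_0\cap M$ times $\N^d$; the geometric-series summation via \eqref{ogolnasuma} and the sign bookkeeping $(1+y)^d(-1)^d=\delta^d$ are right; and the nonnegativity of $P$ follows, exactly as in the paper's Lemma \ref{lemgl}, from writing each $c\in C_0\cap M\subset\sigma^\vee\cap M$ as an $\N$-combination of the full generating set $w_1,\dots,w_K$ and expanding $e^c=\prod(1+S_{w_i})^{\alpha_i}$. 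The trade-off is that your shortcut is intrinsically limited to simplicial cones, where a single fundamental parallelepiped tiles the interior; for general cones no such parallelepiped exists, which is precisely why the paper is forced into the triangulation and the Euler relation \eqref{toprove} to prove Theorem \ref{main} and hence Theorem \ref{final}.
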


A similar theorem is also stated for all three-dimensional cones and a four-dimensional counterexample is given. Unfortunately, the counterexample turns out to be mistaken and the theorem holds for all toric varieties. We prove it in the following section.

\subsection{The general proof}

In the view of Theorem \ref{formula} we need to prove nonnegativeness for sums of lattice points in interiors of arbitrary cones (Theorem \ref{main}). We start by stating the equivalent for \emph{closed} simplicial cones.

Let us remind that a strictly convex rational polyhedral cone over a lattice $L$ is a subset of $L_\R$ of the form

$$\sigma = \{ \alpha_1 v_1 + \alpha_2 v_2 +\dots + \alpha_n v_n\in L_\R : \alpha_1,\alpha_2,\dots,\alpha_n  \in \R_{\ge 0}\}$$
such that $\sigma\cap -\sigma = \{0\}$, i.e. $\sigma$ contains no line through the origin. Whenever we sum over elements of a cone $\sigma$ over a lattice $L$, we mean summation over elements of $\sigma\cap L$.

\begin{lemma}
\label{lemgl}
 Suppose we are given an n-dimensional strictly convex rational simplicial cone $\sigma$ over $\Z^m$. Let $w_1$, $w_2$, \dots, $w_K$ be the generators of $\sigma$ (as a semigroup) with $w_1$, $w_2$, \dots, $w_n$ being the primitive vectors on the rays of $\sigma$. Then
 $$\sum_{t\in \sigma} e^t = (-1)^n \prod_{i=1}^n \frac{1}{S_{w_i}} \cdot P(\{S_{w_i}\}_{i=1,2,\dots,K}),$$
 where $P$ is a polynomial with nonnegative coefficients and $S_{w_i} = e^{w_i} - 1$.
\end{lemma}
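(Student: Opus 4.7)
My approach is to reduce $\sum_{t\in\sigma} e^t$ to a finite sum over the lattice points of a fundamental parallelepiped of $\sigma$, and then to read off positivity from the fact that those lattice points admit nonnegative integer expressions in the semigroup generators $w_1,\ldots,w_K$.

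First, using that $w_1,\ldots,w_n$ are linearly independent (since $\sigma$ is simplicial), every lattice point $t\in\sigma\cap\Z^m$ can be written uniquely as $t = c + \sum_{i=1}^n k_i w_i$ with $k_i \in \N$ and $c$ in the half-open parallelepiped $C = \{\sum_{i=1}^n \theta_i w_i : \theta_i\in[0,1)\}$. Summing and applying the Brion--Vergne rule \eqref{ogolnasuma}, which in particular gives $\sum_{k\in\N} e^{k w_i} = 1/(1-e^{w_i})$, I obtain
\[
\sum_{t\in\sigma} e^t \;=\; \Bigl(\sum_{c\in C\cap\Z^m} e^c\Bigr) \prod_{i=1}^n \frac{1}{1-e^{w_i}} \;=\; (-1)^n\prod_{i=1}^n \frac{1}{S_{w_i}}\,\Bigl(\sum_{c\in C\cap\Z^m} e^c\Bigr),
\]
using $1-e^{w_i} = -S_{w_i}$. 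Thus the whole claim reduces to showing that $Q := \sum_{c\in C\cap\Z^m} e^c$ is a polynomial in $S_{w_1},\ldots,S_{w_K}$ with nonnegative coefficients.

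This is the heart of the argument. Boundedness of $C$ ensures that $C\cap\Z^m$ is finite, so $Q$ is a finite sum. For each $c\in C\cap\Z^m$, the hypothesis that $w_1,\ldots,w_K$ generate $\sigma\cap\Z^m$ as a semigroup (together with the observation $C\subset\sigma$) provides nonnegative integers $a_1,\ldots,a_K$ with $c=\sum_{j=1}^K a_j w_j$. Then
\[
e^c \;=\; \prod_{j=1}^K e^{a_j w_j} \;=\; \prod_{j=1}^K (1+S_{w_j})^{a_j},
\]
which on expansion is a polynomial in $S_{w_1},\ldots,S_{w_K}$ with nonnegative (in fact, binomial) coefficients. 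Summing over $c$ yields the desired $P = Q$.

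The argument is essentially formal; the only step using the standing hypotheses in a nontrivial way is the last one, where I rely precisely on the assumption that the $w_j$ generate $\sigma$ as a semigroup so that a nonnegative integer expression for each $c$ exists. The possible nonuniqueness of such an expression is harmless: any single choice suffices to exhibit $e^c$ as an element of $\N[S_{w_1},\ldots,S_{w_K}]$. I do not anticipate any serious obstacle beyond verifying that the formal manipulations above are compatible with the summation conventions of \cite{BV}, which they are by construction.
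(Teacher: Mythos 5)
Your proposal is correct and follows essentially the same route as the paper's proof: the unique decomposition of each lattice point of $\sigma$ into a point of the half-open parallelepiped $C$ plus a nonnegative integer combination of $w_1,\dots,w_n$, the application of \eqref{ogolnasuma}, and the expansion of $e^c=\prod_j(1+S_{w_j})^{a_j}$ using the semigroup generation hypothesis. No gaps.
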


\begin{proof}
 As the cone $\sigma$ is simplicial, every point of it can be represented uniquely as a sum
 $\sum\limits_{i=1}^n \alpha_i w_i$
 with nonnegative real numbers $\alpha_i$ and conversely, every lattice point of this form belongs to $\sigma$. We can further write
 $$\sum_{i=1}^n \alpha_i w_i = \sum_{i=1}^n \lfloor \alpha_i \rfloor w_i +\sum_{i=1}^n \{\alpha_i\} w_i,$$
 where $\lfloor x \rfloor$ and $\{x\}$ denote the integer part (the floor) and the fractional part of $x$, respectively. This way we expressed every point of $\sigma$ as a unique sum of a point in the semigroup $\bigoplus\limits_{i=1}^n\N w_i$ and a point in the cube $C = \left\{\sum\limits_{i=1}^n \theta_i w_i : \theta_i\in [0,1)\right\}$. Now it means that (using \eqref{ogolnasuma})
 
 $$ \sum_{t\in \sigma} e^t = \sum_{k_1, k_2, \dots, k_n \in \N}\quad \sum_{u\in C\cap \Z^m} e^{\,\, u + \sum\limits_{i=1}^n k_i w_i}
 = \prod_{i=1}^n \frac{1}{1-e^{w_i}} \sum_{u\in C\cap \Z^m} e^u.$$
 The set $C\cap \Z^m$ is finite and as every $u\in C\cap \Z^m$ is a sum of the generators among $w_1$, $w_2$, \dots, $w_N$, the exponent $e^u$ is a product of factors of the form $e^{w_i} = S_{w_i}+1$. Thus $\sum_{u\in C\cap \Z^m} e^u = P(\{S_{w_i}\}_{i=1,2,\dots,K})$ for some polynomial $P$ with nonnegative coefficients. Summing up, we get
 $$\sum_{t\in \sigma} e^t = \prod_{i=1}^n \left( -\frac{1}{S_{w_i}} \right) \cdot P(\{S_{w_i}\}_{i=1,2,\dots,K}),$$
 as desired.
\end{proof}

Now we are ready to prove the general version.

\begin{theorem}
\label{main}
Suppose we are given any $n$-dimensional strictly convex rational polyhedral cone $\sigma$ over $\Z^n$. Let $w_1$, $w_2$, \dots, $w_K$ be the generators of $\sigma$ with $w_1$, $w_2$, \dots, $w_k$ being the primitive vectors on the rays of $\sigma$. Then
 $$\sum_{t\in \Int \sigma} e^t = (-1)^n \prod_{i=1}^k \frac{1}{S_{w_i}} \cdot P(\{S_{w_i}\}_{i=1,2,\dots,K}),$$
 where $P$ is a polynomial with nonnegative coefficients and $S_{w_i} = e^{w_i} - 1$.
\end{theorem}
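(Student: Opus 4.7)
The plan is to reduce the claim for arbitrary $\sigma$ to the simplicial case handled by Lemma \ref{lemgl} by combining (i) a face-Möbius identity
$$\sum_{t \in \Int \sigma} e^t \;=\; \sum_{H \preceq \sigma}(-1)^{\codim H}\sum_{t\in H}e^t,$$
where $H$ ranges over all faces of $\sigma$ (including $\sigma$ itself and $\{0\}$), with (ii) a nonnegativity statement for each closed summand $\sum_{t\in H}e^t$.

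The identity in (i) is a routine consequence of the Euler-Poincaré relation $\sum_{H \preceq \sigma'}(-1)^{\codim H}=0$, which holds for every polyhedral cone $\sigma'$ of positive dimension. Indeed, for a point lying in the relative interior of a face $H_0 \preceq \sigma$, the right-hand side equals $\sum_{H \succeq H_0}(-1)^{\codim_\sigma H}$. This sum ranges over the interval $[H_0,\sigma]$ in the face lattice, which by standard cone combinatorics is itself the face lattice of a polyhedral cone of dimension $\codim H_0$. The Euler-Poincaré relation then gives $1$ when $H_0=\sigma$ and $0$ otherwise, matching the indicator of $\Int\sigma$.

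For (ii), I would prove the closed-cone analog of Lemma \ref{lemgl}: for any rational polyhedral cone $H$ of dimension $d$,
$$\sum_{t \in H} e^t \;=\; (-1)^{d}\, \prod_{w \in \text{rays}(H)}\frac{1}{S_w}\cdot P_H\bigl(\{S_{w_i}\}\bigr),$$
with $P_H$ a polynomial in the $S_{w_i}$ having nonnegative coefficients. The strategy is to triangulate $H$ using only its own rays into full-dimensional simplicial cones $\tau_1,\dots,\tau_M$ and to choose an ordering (arising from a line shelling) so that each difference $A_j := \tau_j \setminus \bigcup_{j'<j}\tau_{j'}$ is a half-open simplicial cone, namely $\tau_j$ with a designated collection of its facets removed. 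The unique-decomposition argument from the proof of Lemma \ref{lemgl} then adapts verbatim to each $A_j$: writing $\alpha_i=k_i+\theta_i$ with $k_i\in\N$ and taking $\theta_i\in(0,1]$ on the removed facets and $\theta_i\in[0,1)$ elsewhere produces a contribution of the desired form with nonnegative numerator. A short face-convexity argument (if $u=\sum n_i w_i\in H$ with $n_i\ge 0$, then necessarily $w_i \in H$ whenever $n_i>0$) shows that the Hilbert basis of $H\cap L$ is contained in $\{w_1,\dots,w_K\}$, so $P_H$ is indeed a polynomial in our fixed set of variables.

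Substituting the closed formulas into the Möbius identity yields
\begin{align*}
\sum_{t \in \Int \sigma} e^t
&= \sum_{H \preceq \sigma}(-1)^{\codim H+\dim H}\prod_{w\in\text{rays}(H)}\frac{1}{S_w}\cdot P_H\\
&= (-1)^n \prod_{i=1}^k\frac{1}{S_{w_i}}\cdot\Bigl(\sum_{H \preceq \sigma}P_H\prod_{\substack{i\leq k\\ w_i\notin H}}S_{w_i}\Bigr),
\end{align*}
since $\codim H+\dim H=n$ makes the overall sign uniformly $(-1)^n$ and the bracketed expression is manifestly a polynomial in $\{S_{w_i}\}$ with nonnegative coefficients. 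The main technical obstacle will be the line-shelling argument behind the half-open decomposition: one must exhibit a triangulation and ordering for which each $A_j$ really is describable by the removal of a specific collection of facets of $\tau_j$, and not by some more complicated subset. Once this combinatorial input is secured, the rest of the argument is essentially formal.
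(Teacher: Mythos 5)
Your argument is correct in outline, but it takes a genuinely different route from the paper's. The paper never works with the (generally non-simplicial) faces of $\sigma$: it fixes a triangulation of $\sigma$ using rational rays, proves the inclusion--exclusion identity \eqref{toprove} over \emph{all cones of the triangulation} with coefficients $(-1)^{n-\dim\tau}$ (establishing the coefficient identity \eqref{inex} by intersecting with a small transversal disk and computing an Euler characteristic of a pair), and then feeds each closed \emph{simplicial} piece into Lemma \ref{lemgl}. You instead apply the Euler/M\"obius relation over the face lattice of $\sigma$ itself, which pushes all the difficulty into a closed-cone positivity statement for each face $H$ --- and since faces of $\sigma$ are arbitrary pointed cones, you are forced to prove a strictly stronger auxiliary result than Lemma \ref{lemgl}, for which you invoke a half-open (shelling-type) decomposition. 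What your approach buys is a cleaner top-level identity (the interval $[H_0,\sigma]$ argument is standard cone combinatorics, and the sign bookkeeping $\codim H+\dim H=n$ is transparent); what the paper's approach buys is that the only positivity input ever needed is the simplicial Lemma \ref{lemgl}, at the cost of a slightly more hands-on topological verification of \eqref{inex}. Your reduction of the Hilbert basis of $H\cap L$ to the $w_i$ lying in $H$ via a supporting functional is correct and is needed in either approach implicitly.

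One caution on the step you yourself flag: ``line shelling'' is not quite the right tool, since not every triangulation of a ball (or of a cone) is shellable. The clean way to get the half-open decomposition is either to use a regular (e.g.\ placing) triangulation on the rays of $H$, which is shellable, or --- better --- to dispense with shellings entirely and take a generic point $w$ in the interior of $H$, removing from each full-dimensional simplicial cone $\tau_j$ exactly those facets whose spanning hyperplane separates $w$ from $\Int\tau_j$; this partitions $H$ into half-open simplicial cones for \emph{any} triangulation. With that (standard) input secured, the rest of your argument goes through: lattice points of the half-open box still lie in $\sigma\cap L$, so their exponentials remain nonnegative polynomials in the $S_{w_i}$, and the extra factors $S_{w_i}$ for rays of $\sigma$ not in $H$ are monomials, preserving nonnegativity.
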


\begin{proof}
 The idea is to triangulate the cone and use Lemma \ref{lemgl} for the cones appearing in the triangulation (not only the maximal ones).
 
 Let $\sigma = \bigcup\limits_{i=1}^k \tau_k$ be a triangulation of $\sigma$
 with $\tau_k$ being the simplicial cones of dimension $n$, such that all added rays are rational. 
  Then if $\Sigma$ is the set of all the cones, of all dimensions, appearing in the triangulation (i.e. $\tau_k$'s and their faces, including the trivial cone as the 0-dimensional face of any other), we get
 $$\sigma = \bigsqcup\limits_{\tau\in\Sigma} \Int\tau.$$
 Moreover, $\Int \sigma$ is a subsum of this disjoint sum and also every $\tau\in\Sigma$ is a subsum. We will investigate the summands occurring in those subsums in order to prove
 \begin{equation}
 \label{toprove}
     \sum_{t\in\Int \sigma} e^t = \sum_{\tau\in\Sigma} (-1)^{n-\dim \tau}  \sum_{t\in \tau} e^t.
 \end{equation}
 If we then (using Lemma \ref{lemgl}) denote by $P_\tau$ the polynomial with nonnegative coefficients such that
 $$\sum_{t\in \tau} e^t = (-1)^{\dim\tau} \prod_{i=1}^k \frac{1}{S_{w_i}} \cdot P_\tau(\{S_{w_i}\}_{i=1,2,\dots,K}),$$
 we will obtain
 \begin{multline*}
  \sum_{t\in\Int \sigma} e^t = \sum_{\tau\in\Sigma} (-1)^{n-\dim \tau}  (-1)^{\dim\tau} \prod_{i=1}^k \frac{1}{S_{w_i}} \cdot P_\tau(\{S_{w_i}\}_{i=1,2,\dots,K}) \\
  = (-1)^n \prod_{i=1}^k \frac{1}{S_{w_i}}\cdot \sum_{\tau\in\Sigma} P_\tau(\{S_{w_i}\}_{i=1,2,\dots,K}),
 \end{multline*}
 thus proving the claim for
 $$P(\{S_{w_i}\}_{i=1,2,\dots,N}) = \sum_{\tau\in \Sigma} P_\tau(\{S_{w_i}\}_{i=1,2,\dots,K}).$$
 For any $\kappa\in \Sigma$ let $\is(\kappa) = \sum_{t\in \Int\kappa} e^t$ (abbreviation of \emph{Interior Sum}).
 We have
 $$\Int\sigma = \bigsqcup_{\kappa\in\Sigma: \Int \kappa\subset \Int\sigma} \Int \kappa$$
 On the other hand, for any $\tau\in\Sigma$ we have
 $$\tau = \bigsqcup_{\kappa\in\Sigma : \kappa\subset \tau} \Int\kappa.$$
 Thus by substituting these two equalities, \eqref{toprove} becomes equivalent to
 $$\sum_{\kappa\in\Sigma: \Int \kappa\subset \Int\sigma} \is(\kappa) = \sum_{\tau\in\Sigma} (-1)^{n-\dim \tau}\sum_{\kappa\in\Sigma : \kappa\subset \tau} \is(\kappa). $$
 Now we need to prove that for any $\kappa\in\Sigma$ the value $\is(\kappa)$ occurs with the same multiple on both sides. On the left-hand side the multiple is 1 if $\Int\kappa\subset \Int\sigma$ and 0 otherwise. On the right however, it is equal to $\sum_{\tau\in\Sigma : \kappa\subset\tau} (-1)^{n-\dim\tau}$. Thus we reduced the theorem to proving the equation
 \begin{equation}
 \label{inex}
\sum_{\tau\in\Sigma : \kappa\subset\tau} (-1)^{n-\dim\tau}=
 \begin{cases}
 1 \text{ if }\Int\kappa \subset \Int\sigma, \\
 0 \text{ otherwise}.
 \end{cases}
 \end{equation}
 for any $\kappa\in\Sigma$.

 Now take any such $\kappa$ of dimension $k$. Take any point $p\in\Int\kappa$ and let $A$ be a $n-k$-dimensional disk around $p$, intersecting $\kappa$ transversally. The disk $A$ will then intersect all the cones $\tau\in\Sigma$ containing $\kappa$; by taking $A$ small enough, we can also ensure that it does not intersect any other cones from $\Sigma$. Then $A\cap \sigma$ is a contractible set, as a star domain centered at $p$ (in fact, it's just homeomorphic to $D^{n-k}$). 
 
 Now $p$ is an interior point of $A\cap\sigma$ iff it is an interior point of $\sigma$, which is equivalent to $\Int \kappa\subset \Int \sigma$. The intersections of cones $\tau\supset \kappa$ with $A$ produce a triangulation of $A\cap\sigma$. Every such cone $\tau$ produces a simplex of dimension $\dim\tau-k$. Then
 $$\sum_{\tau\in\Sigma : \kappa\subset\tau} (-1)^{\dim\tau-k}$$
 is the Euler characteristic of the pair $(A\cap\sigma,\partial A\cap \sigma)$ (only the simplices contained in the interior of A 
are counted). But we have
 \begin{equation}
 \label{chieq}
     \chi(A\cap\sigma,\partial A\cap \sigma) = \chi(A\cap\sigma) - \chi(\partial A\cap \sigma) = \chi\left(D^{n-k}\right) - \chi(\partial A\cap \sigma) = 1 - \chi(\partial A\cap \sigma).
 \end{equation}
 Now if $p$ is the interior point of $A\cap\sigma$, we have $\partial A\cap \sigma  = \partial(A\cap \sigma) \simeq S^{n-k-1}$ and otherwise $\partial A\cap \sigma \simeq D^{n-k-1}$. In the former case $\chi(\partial A\cap \sigma) = 1 - (-1)^{n-k-1}$ and in the latter $\chi(\partial A\cap \sigma) = 1$. Combining these with \eqref{chieq} yields
 $$\chi(A\cap\sigma,\partial A\cap \sigma) = 
 \begin{cases}
 (-1)^{n-k} \text{ if } \Int \kappa\subset \Int \sigma, \\
 0 \text{ otherwise.}
 \end{cases}
 $$
 Thus
 \begin{multline*}
 \sum_{\tau\in\Sigma : \kappa\subset\tau} (-1)^{n-\dim\tau}=(-1)^{n-k} \sum_{\tau\in\Sigma : \kappa\subset\tau} (-1)^{\dim\tau-k} \\
 = (-1)^{n-k} \chi(A\cap\sigma,\partial A\cap \sigma) = 
 \begin{cases}
 1 \text{ if } \Int \kappa\subset \Int \sigma, \\
 0 \text{ otherwise,}
 \end{cases}
 \end{multline*}
 and the proof of \eqref{inex}, and hence of \eqref{toprove} is finished.
\end{proof}

\begin{remark}
The equation \eqref{toprove} is essentially the same as the generalized Euler relations \cite[Equations (14.64) and (14.65)]{Schneider} for polytopes and face-to-face mosaics. We provided an elementary proof thereof.
\end{remark}

 Theorem \ref{main} combined with Theorem \ref{formula} implies the following generalization of \cite[Proposition 13.2]{Weber2016}.

\begin{theorem}
\label{final}
Let $X$ be a toric variety defined by a $d$-dimensional cone $\sigma\in N$. Let $w_1$, $w_2$, \dots, $w_N$ be the generators of the dual cone $\sigma^\vee\in M$, with $w_1$, $w_2$, \dots, $w_k$ being the primitive vectors on the rays of $\sigma^\vee$. Then the Hirzebruch class of inclusion of the open orbit $td_y^T(\Omega_{\{0\}}\to\C^K)$ is of the form
\begin{equation} \label{hirzform}
\delta^d \prod_{i=1}^k \frac{1}{S_{w_i}} \cdot P(\{S_{w_i}\}_{i=1,2,\dots,K})[\Omega_\sigma],
\end{equation}
 where $\delta = -y-1$, $P$ is a polynomial with nonnegative coefficients and $S_{w_i} = e^{w_i} - 1$.
\end{theorem}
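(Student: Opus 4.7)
The plan is to derive Theorem~\ref{final} by combining Theorem~\ref{formula}, which gives the restriction of the equivariant Hirzebruch class at a fixed point as a sum over faces of $\sigma$, with Theorem~\ref{main}, which expresses interior sums of lattice points in a full-dimensional cone as $(-1)^n \prod \frac{1}{S_{w_i}}$ times a polynomial with nonnegative coefficients in the $S_{w_j}$. The overall strategy is to isolate the summand of Theorem~\ref{formula} corresponding to the open orbit, and then feed its interior sum into Theorem~\ref{main}.

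First I would invoke the additivity of the motivic Hirzebruch class over the orbit stratification $X_\sigma = \bigsqcup_{\tau \prec \sigma}\Omega_\tau$, which identifies the summand indexed by $\tau$ in Theorem~\ref{formula} with $\Td_y^T(\Omega_\tau \to \C^K)$. The open orbit $\Omega_{\{0\}}$ corresponds to $\tau = \{0\}$; for that face $\codim\tau = d$ and $\tau^\perp = M_\R$, so $\sigma^\vee \cap \tau^\perp = \sigma^\vee$ and the relevant summand becomes
$$\Td_y^T(\Omega_{\{0\}}\to\C^K) = (1+y)^d \sum_{m \in \Int\sigma^\vee} e^m \,[\Omega_\sigma].$$

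Next I would apply Theorem~\ref{main} directly to $\sigma^\vee$, which is a $d$-dimensional strictly convex rational polyhedral cone in $M_\R \simeq \R^d$ whose primitive ray generators and full semigroup generators are precisely the $w_i$ appearing in the statement. Substituting the resulting expression $(-1)^d \prod_{i=1}^k \frac{1}{S_{w_i}}\cdot P(\{S_{w_j}\}_{j=1,\dots,K})$ into the preceding display and using $(1+y)^d(-1)^d = (-(1+y))^d = \delta^d$ immediately yields \eqref{hirzform}, with the very same $P$ certifying nonnegativity.

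I do not foresee any serious obstacle: the argument is a one-line substitution once the extraction step is justified. The only point requiring care is the identification of the $\tau = \{0\}$ summand of Theorem~\ref{formula} with $\Td_y^T(\Omega_{\{0\}}\to\C^K)$, which rests on the constructible-additivity of the motivic Hirzebruch class together with the orbit-by-orbit structure of Weber's derivation of Theorem~\ref{formula}. This is the only conceptual input beyond Theorem~\ref{main}; the remaining bookkeeping (matching exponents and signs) is routine.
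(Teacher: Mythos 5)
Your proposal is correct and follows exactly the route the paper intends: the paper itself gives no further argument beyond stating that Theorem~\ref{final} follows from combining Theorem~\ref{formula} (whose $\tau=\{0\}$ summand is the class of the open-orbit inclusion) with Theorem~\ref{main} applied to $\sigma^\vee$, and your sign bookkeeping $(1+y)^d(-1)^d=\delta^d$ is the same. No discrepancies.
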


\begin{example}

As an example, we compute the sum $\sum_{t\in \Int \sigma} e^t$ for $\sigma$ being the cone generated by $P_1 = (0,0,1)$, $P_2 = (1,0,1)$, $P_3 = (1,1,1)$, $P_4 = (0,1,1)$ -- which is a cone over the square $P_1P_2P_3P_4$. This corresponds to the affine variety which is a cone over $\PP^1\times \PP^1$ -- embedded in $\A^4$ (a cone over the Segre embedding).

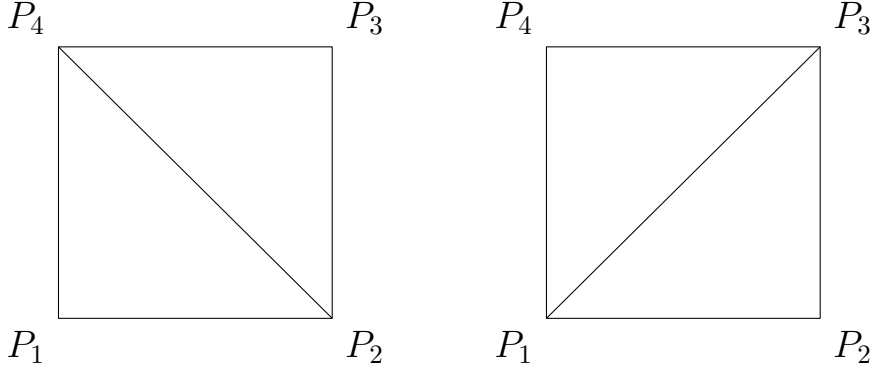
\begin{figure}
\centering
\begin{tikzpicture}[line join=round, scale=0.9]
\LARGE
 \coordinate [label=-135:  $P_1$] (P1) at (0,0);
 \coordinate [label=-45:  $P_2$] (P2) at (4,0);
 \coordinate [label=45:  $P_3$] (P3) at (4,4);
 \coordinate [label=135:  $P_4$] (P4) at (0,4);
 
 \draw (0,0)--(4,0)--(4,4)--(0,4)--cycle;
 \draw (4,0)--(0,4);
\end{tikzpicture}\hspace{1cm}
\begin{tikzpicture}[line join=round, scale=0.9]
\LARGE
 \coordinate [label=-135:  $P_1$] (P1) at (0,0);
 \coordinate [label=-45:  $P_2$] (P2) at (4,0);
 \coordinate [label=45:  $P_3$] (P3) at (4,4);
 \coordinate [label=135:  $P_4$] (P4) at (0,4);
 
 \draw (0,0)--(4,0)--(4,4)--(0,4)--cycle;
 \draw (0,0)--(4,4);
\end{tikzpicture}
\caption{The square $P_1P_2P_3P_4$ with two possible triangulations}
\label{square}
\end{figure}

There are two ways of triangulating the square without adding any rays, they are depicted in Figure \ref{square}. Let us focus on the left one. From \eqref{toprove} we have

\begin{equation}
\label{smdec}
\sum_{t\in\Int \sigma} e^t = 
\sum_{t\in \lambda_1} e^t + \sum_{t\in \lambda_2}e^t
-\sum_{i=1}^4 \sum_{t\in \rho_i} e^t - \sum_{t\in \tau} e^t + \sum_{i=1}^4 \sum_{t\in \N P_i} e^t - 1
\end{equation}
with:

\begin{itemize}
 \item $\lambda_1$ being the closed cone over triangle $P_1P_2P_4$;
 \item $\lambda_2$ being the closed cone over triangle $P_2P_3P_4$;
 \item $\rho_i$ being the closed cone over the segment $P_iP_{i+1}$ (with $P_5 = P_1$);
 \item $\tau$ being the closed cone over the segment $P_2P_4$.
\end{itemize}
Note that we used that $P_i$'s are primitive on the corresponding rays and that the 1 at the end corresponds to the trivial cone.

The cone $\lambda_1$ is actually generated by $P_1$, $P_2$, $P_4$ (as a semigroup) or, in other words, the lattice $\Z P_1 +\Z P_2 + \Z P_3$ is saturated. Indeed, any point in $\lambda_1$ is of the form $\theta_1 P_1 + \theta_2 P_2 + \theta_4 P_4 = (\theta_2,\theta_4,\theta_1+\theta_2+\theta_4)$ for some $\theta_1,\theta_2,\theta_4\in \R_{\ge 0}$. If all the coordinates are to be integers, then clearly $\theta_1,\theta_2,\theta_4$ are integers. Similarly, $P_2$, $P_3$, $P_4$ generate a saturated lattice and hence analogous property holds also for all the smaller cones. Thus we have

$$\sum_{t\in\lambda_1} e^t = \frac{1}{(1-e^{P_1})(1-e^{P_2})(1-e^{P_4})} = - \frac{1}{S_{P_1}S_{P_2}S_{P_4}}, $$
$$\sum_{t\in\lambda_2} e^t = \frac{1}{(1-e^{P_2})(1-e^{P_3})(1-e^{P_4})} = - \frac{1}{S_{P_2}S_{P_3}S_{P_4}}, $$
$$\sum_{t\in\rho_i} e^t = \frac{1}{(1-e^{P_i})(1-e^{P_{i+1}})} = \frac{1}{S_{P_i}S_{P_{i+1}}}, $$
$$\sum_{t\in\tau} e^t = \frac{1}{(1-e^{P_2})(1-e^{P_{4}})} = \frac{1}{S_{P_2}S_{P_{4}}}, $$
$$\sum_{t\in \N Q} e^t = \frac{1}{1-e^Q} = -\frac{1}{S_Q} \text{ for any } Q \in \Z^3.$$
Plugging these into \eqref{smdec} yields

\allowdisplaybreaks
\setlength{\jot}{13pt}
\begin{align*}
\sum_{t\in\Int \sigma} e^t =  - \frac{1}{S_{P_1}S_{P_2}S_{P_4}} - \frac{1}{S_{P_2}S_{P_3}S_{P_4}} - \sum_{i=1}^4 \frac{1}{S_{P_i}S_{P_{i+1}}} -\frac{1}{S_{P_2}S_{P_{4}}} -  \sum_{i=1}^4 \frac{1}{S_{P_i}} -1
\\
=-\frac{S_{P_3}+S_{P_1} + \sum\limits_{i=1}^4 S_{P_i}S_{P_{i+1}}+S_{P_1}S_{P_3} +\sum\limits_{i=1}^4 S_{P_i}S_{P_{i+1}}S_{P_{i+2}}+S_{P_1}S_{P_2}S_{P_3}S_{P_4}}{S_{P_1}S_{P_2}S_{P_3}S_{P_4}}.
\end{align*}
We obtained the form from Theorem \ref{main} for $P(S_{P_1},S_{P_2},S_{P_3},S_{P_4})$ equal to 
$$
 S_{P_1}+S_{P_3}+S_{P_1}S_{P_3} + \sum\limits_{i=1}^4 S_{P_i}S_{P_{i+1}} +\sum\limits_{i=1}^4 S_{P_i}S_{P_{i+1}}S_{P_{i+2}}+S_{P_1}S_{P_2}S_{P_3}S_{P_4}.
$$
The other triangulation would lead to a different polynomial $P'(S_{P_1},S_{P_2},S_{P_3},S_{P_4})$ equal to
$$S_{P_2}+S_{P_4}+S_{P_2}S_{P_4} + \sum\limits_{i=1}^4 S_{P_i}S_{P_{i+1}} +\sum\limits_{i=1}^4 S_{P_i}S_{P_{i+1}}S_{P_{i+2}}+S_{P_1}S_{P_2}S_{P_3}S_{P_4}.$$
By the results above, both polynomials give the same value when we substitute $S_{P_i} = e^{P_i}-1$. We can notice it directly with the following equality:
\begin{multline*}
S_{P_1}+S_{P_3}+S_{P_1}S_{P_3} = (S_{P_1}+1)(S_{P_3}+1) - 1 = e^{P_1}e^{P_3}-1 \\
= e^{(0,0,1)}e^{(1,1,1)} -1  = e^{(1,1,2)} -1  = e^{(1,0,1)}e^{(0,1,1)} -1 \\
= e^{P_2}e^{P_4}-1 = (S_{P_2}+1)(S_{P_4}+1) - 1 = S_{P_2}+S_{P_4}+S_{P_2}S_{P_4}.
\end{multline*}

\end{example}

\bibliographystyle{plain}
\bibliography{ref}

\begin{thebibliography}{10}

\bibitem{AB}
Michael Atiyah and Raoul Bott.
\newblock The moment map and equivariant cohomology.
\newblock {\em Topology}, 23:1--28, 1984.

\bibitem{BFM}
Paul Baum, William Fulton, and Robert MacPherson.
\newblock {Riemann-Roch for singular varieties}.
\newblock {\em Publ. Math. IHES}, 45:101--167, 1975.

\bibitem{Berline}
Nicole Berline and Mich{\`e}le Vergne.
\newblock Classes caract\'eristiques \'equivariantes. {F}ormule de localisation
  en cohomologie \'equivariante.
\newblock {\em C. R. Acad. Sci. Paris S\'er. I Math.}, 295(9):539--541, 1982.

\bibitem{Bi}
Franziska Bittner.
\newblock {The universal Euler characteristic for varieties of characteristic
  zero}.
\newblock {\em Comp. Math.}, 140:1011--1032, 2004.

\bibitem{BSY}
Jean-Paul Brasselet, J{\"o}rg Sch{\"u}rmann, and Shoji Yokura.
\newblock Hirzebruch classes and motivic {C}hern classes for singular spaces.
\newblock {\em J. Topol. Anal.}, 2(1):1--55, 2010.

\bibitem{BV}
Michel Brion and Mich\`{e}le Vergne.
\newblock An equivariant {R}iemann-{R}och theorem for complete, simplicial
  toric varieties.
\newblock {\em J. Reine Angew. Math.}, 482:67--92, 1997.

\bibitem{BZ}
Jean-Luc Brylinski and Bin Zhang.
\newblock {Equivariant {T}odd classes for toric varieties}.
\newblock arXiv:math/0311318.

\bibitem{Edidin2}
Dan Edidin and William Graham.
\newblock {Characteristic classes in the {C}how ring}.
\newblock {\em J. Algebraic Geometry}, 6:431--443, 1997.

\bibitem{Edidin}
Dan Edidin and William Graham.
\newblock Equivariant intersection theory.
\newblock {\em Invent. Math}, 131(3):595--634, Mar 1998.

\bibitem{Fulton2}
William Fulton.
\newblock {\em Intersection {T}heory}.
\newblock Springer-Verlag, 1998.

\bibitem{Hirzebruch}
Friedrich Hirzebruch.
\newblock {\em {Topological Methods in Algebraic Geometry}}, volume 131 of {\em
  A Series of Comprehensive Studies in Mathematics}.
\newblock Springer-Verlag, 3 edition, 1978.
\newblock Translated by R. L. E. Schwarzenberger.

\bibitem{Lo}
Eduard Looijenga.
\newblock Motivic measures.
\newblock In {\em S\'eminaire Bourbaki: volume 1999/2000, expos\'es 865-879},
  number 276 in Ast\'erisque, pages 267--297. Soci\'et\'e math\'ematique de
  France, 2002.
\newblock talk:874.

\bibitem{MaSc}
Lauren{\c{t}}iu Maxim and J{\"o}rg Sch{\"u}rmann.
\newblock Characteristic classes of singular toric varieties.
\newblock {\em Comm. Pure Appl. Math.}, 68(12):2177--2236, 2015.

\bibitem{Milnor}
John~W. Milnor and John~D. Stasheff.
\newblock {\em {Characteristic Classes}}.
\newblock Princeton University Press and University of Tokyo Press, 1974.

\bibitem{Musin}
Oleg~R. Musin.
\newblock On rigid {H}irzebruch genera.
\newblock {\em Mosc. Math. J.}, 11(1):139--147, 2011.

\bibitem{Ohmoto}
Toru Ohmoto.
\newblock {Equivariant Chern classes of singular algebraic varieties with group
  actions}.
\newblock {\em Math. Proc. Camb. Philos. Soc}, 140(1):115--134, 2006.

\bibitem{Schneider}
Rolf Schneider and Wolfgang Weil.
\newblock {\em {Stochastic and Integral Geometry}}.
\newblock {Probability and Its Applications}. Springer-Verlag, 2008.

\bibitem{Totaro}
Burt Totaro.
\newblock The {C}how ring of a classifying space.
\newblock In {\em Proc. Symposia in Pure Math. 67}, pages 249--281, 1999.

\bibitem{Weber2016}
Andrzej Weber.
\newblock Equivariant {H}irzebruch class for singular varieties.
\newblock {\em Selecta Mathematica}, 22(3):1413--1454, Jul 2016.

\end{thebibliography}

\end{document}